\newtheorem{theorem}{Theorem}[section]
\newtheorem{lemma}[theorem]{Lemma}
\newtheorem{observation}[theorem]{Observation}
\newtheorem{problem}[theorem]{Problem}
\theoremstyle{definition}
\newtheorem{definition}[theorem]{Definition}
\theoremstyle{remark}
\newtheorem{remark}[theorem]{Remark}
\newcommand{\vol}{{\rm vol}\hskip0.02cm}
\def\f2{\mathbb{F}_2}
\def\lip{\hskip0.02cm{\rm Lip}\hskip0.01cm}
\newcommand{\ep}{\varepsilon}
\begin{document}
\title{\LARGE{Different forms of metric
characterizations of classes of Banach spaces}}

\author{M.\,I.~Ostrovskii}

\date{\today}
\maketitle

\noindent{\bf Abstract.} For each sequence $\{X_m\}_{m=1}^\infty$
of finite-dimensional Banach spaces there exists a sequence
$\{H_n\}_{n=1}^\infty$ of finite connected unweighted graphs with
maximum degree $3$ such that the following conditions on a Banach
space $Y$ are equivalent:
\begin{itemize}
\item $Y$ admits uniformly isomorphic embeddings of
$\{X_m\}_{m=1}^\infty$. \item $Y$ admits uniformly bilipschitz
embeddings of $\{H_n\}_{n=1}^\infty$.
\end{itemize}
\medskip

\noindent{\bf 2010 Mathematics Subject Classification:} Primary:
46B07; Secondary: 05C12, 46B85, 54E35

\begin{large}


\section{Introduction}

In connection with  problems of embeddability of metric spaces
into Banach spaces it would be interesting to find metric
characterizations of well-known classes of Banach spaces. By a
{\it metric characterization} we mean a set of formulas with some
variables, quantifiers and inequalities, where the inequalities
are between algebraic expressions containing distances between
those variables which are elements of spaces. We say that such set
of formulas {\it characterizes} a class $\mathcal{P}$ of Banach
spaces if $X\in\mathcal{P}$ if and only if all of the formulas of
the set hold for $X$. We consider a more narrow class of metric
characterizations, namely characterizations based on the notion of
test-spaces.

\begin{definition}\label{D:TestSp} Let $\mathcal{P}$ be a class of Banach spaces
and let $T=\{T_\alpha\}_{\alpha\in A}$ be a set of metric spaces.
We say that $T$ is a set of {\it test-spaces} for $\mathcal{P}$ if
the following two conditions are equivalent
\begin{enumerate}
\item $X\notin\mathcal{P}$

\item\label{I:DefTesSp2} The spaces $\{T_\alpha\}_{\alpha\in A}$
admit bilipschitz embeddings into $X$ with uniformly boun\-ded
distortions.
\end{enumerate}
\end{definition}

\begin{remark} Reading of the rest of this introduction requires
more background than reading Sections
\ref{S:Graphic}--\ref{S:StricConv} of the paper.
\end{remark}

\begin{remark} We write $X\notin\mathcal{P}$ rather than
$X\in\mathcal{P}$ for terminological reasons: we would like to use
terms ``test-spaces for reflexivity, superreflexivity, etc.''
rather than ``test-spaces for {\bf non}reflexivity, {\bf
non}superreflexivity, etc.''
\end{remark}

\begin{remark} For each collection $\{T_\alpha\}_{\alpha\in A}$ of metric spaces
the condition \eqref{I:DefTesSp2} of Definition \ref{D:TestSp}
determines the corresponding class of Banach spaces. It seems that
at least in some cases it would be interesting to start with a
class of metric spaces and to try to understand what is the
corresponding class of Banach spaces. Papers \cite[Corollary
1.7]{CK09} and \cite[Theorems 3.2 and 3.6]{Ost10+} contain results
of this type. We are not going to pursue this direction here.
\end{remark}

Test-spaces are known for several important classes of Banach
spaces: superreflexive \cite{Bou86,Mat99,Bau07,JS09}; spaces
having some type $t>p$, where $p\in[1,2)$
\cite{BMW86,Pis86,Bau07}; spaces with non-trivial cotype
\cite{MN08,Bau09+}.
\medskip

There are some test-space characterizations which are usually
stated differently and considered as Ba\-nach-space-theoretical
characterizations rather than metric characterizations. We mean
the following characterizations. We refer to \cite{DJT95} and
\cite{LT79} for the theory of type and cotype of Banach spaces.
\medskip

Maurey and Pisier \cite{MP76} introduced for each
infinite-dimensional Banach space $X$ parameters
$p(X)=\sup\{p:~X\hbox{ has type }p\}$ and $q(X)=\inf\{q:~X\hbox{
has cotype }q\}$ and proved that the spaces
$\{\ell_{p(X)}^n\}_{n=1}^\infty$ and
$\{\ell_{q(X)}^n\}_{n=1}^\infty$ are finitely representable in $X$
in the sense that $\forall\ep>0$ $\forall n\in\mathbb{N}$ there is
a subspace $X_{n,\ep}$ in $X$ satisfying
$d(X_{n,\ep},\ell_{p(X)}^n)\le1+\ep$, where $d$ is the
Banach-Mazur distance. (This result is based on the work of
Krivine \cite{Kri76}, proofs of these results can be found in
\cite{MS86}.)\medskip

On the other hand, Bretagnolle, Dacunha-Castelle, and Krivine
\cite{BDK66} proved that for $1\le p<q\le 2$ the space $L_q$
admits an isometric embedding into $L_p$.
\medskip

Also, as is easy to check, the space $L_p$ $(1\le p\le 2)$ has
type $p$ but does not have any type $t>p$ (see
\cite[p.~216]{DJT95}).
\medskip

Combining these results we get the following characterizations.

\begin{itemize}

\item[{\bf (i)}] Let $p\in[1,2)$. Let $\mathcal{A}_p$ be the class
of Banach spaces for which $p(X)>p$. Then $X\notin \mathcal{A}_p$
if and only if $\{\ell_p^n\}_{n=1}^\infty$ admit uniformly
isomorphic embeddings into $X$.

\item[{\bf (ii)}] In a similar way spaces with $q(X)=\infty$ are
characterized as spaces admitting uniformly isomorphic embeddings
of $\{\ell_\infty^n\}_{n=1}^\infty$.

\end{itemize}

The characterizations just stated are not in terms of bilipschitz
embeddings, but applying the theory of differentiability, see
\cite{HM82} and \cite[Theorem 7.9 and Corollary 7.10]{BL00} and
local reflexivity (see \cite{LR69} and \cite{JRZ71}) it can be
shown that the word ``isomorphic'' can be replaced by the word
``bilipschitz'' in {\bf (i)--(ii)}. (We say that a collection of
embeddings is {\it uniformly bilipschitz} if the embeddings have
uniformly bounded distortions.) We do not present this argument in
detail here because it is almost identical to the argument which
we present at the end of our proof of Theorem \ref{T:FinDimFinGr}.
\medskip

In this paper we are interested in the following problem.

\begin{problem}\label{P:GraphTestSp} Let
$\mathcal{P}$ be a class of Banach spaces which can be
characterized in terms of countably many test-spaces which are
finite-dimensional normed spaces.
\smallskip

\noindent{\bf (a)} Is it possible to describe $\mathcal{P}$ in
terms of countably many test-spaces which are finite unweighted
graphs with their graph distances?
\smallskip

\noindent{\bf (b)} Is it possible to require, in addition, that
the graphs have uniformly bounded degrees of vertices?
\end{problem}

Special cases of this problem were posed by W.\,B.~Johnson during
the seminar ``Nonlinear geometry of Banach spaces'' (Workshop in
Analysis and Probability at Texas A~\&~M University,
2009).\medskip

Our main purpose it to give an affirmative answer to Problem
\ref{P:GraphTestSp}.

\section{Graphic test-spaces for classes having finite-dimensional
test-spaces}\label{S:Graphic}

Our first goal is to prove the following result:

\begin{theorem}\label{T:FinDimFinGr} If a class $\mathcal{P}$ can be characterized
using test-spaces $\{X_m\}_{m=1}^\infty$ which are finite
dimensional Banach spaces, then $\mathcal{P}$ can be characterized
using test-spaces which are finite unweighted graphs with their
graph distances.
\end{theorem}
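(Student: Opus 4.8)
The plan is to reduce the problem to a discretization-plus-net argument. First I would observe that, without loss of generality, each test-space $X_m$ can be replaced by a finite $\ep$-net $N_m$ of its unit ball (say with $\ep=2^{-m}$): a Banach space $Y$ admits uniformly isomorphic embeddings of $\{X_m\}$ if and only if it admits uniformly bilipschitz embeddings of the finite metric spaces $\{N_m\}$ equipped with the restriction of the norm. One direction is trivial (restrict the embedding); for the converse one extends a bilipschitz embedding of a fine net to a bilipschitz embedding of the whole ball by a standard compactness/Lipschitz-extension argument, using that $N_m$ is $\ep$-dense and the target is a Banach space, and then one appeals to local reflexivity exactly as indicated in the discussion of \textbf{(i)--(ii)} above to pass from bilipschitz to isomorphic embeddings of the finite-dimensional spaces $X_m$. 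So it suffices to produce, for each finite metric space $N_m$, a finite unweighted graph $G_m$ whose graph metric is bilipschitz-equivalent (with universal distortion, or at worst distortion tending to $1$) to $N_m$, and such that bilipschitz embeddability of $G_m$ into $Y$ is equivalent to that of $N_m$.

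The key step is the graph realization. Given a finite metric space $(N_m,\rho)$ with, say, $|N_m|=k$ points and all distances in $[1,D]$, rescale so that distances are large integers: fix a large integer $L=L_m$ and let $\rho'(x,y)$ be $\rho(x,y)$ rounded to the nearest multiple of $1/L$, then multiply by $L$ to get integer "target distances'' $\tilde\rho(x,y)=\lceil L\rho(x,y)\rceil$, which satisfy the triangle inequality and approximate $L\rho$ within a factor $1+O(1/L)$. Now build $G_m$ by taking the $k$ marked vertices of $N_m$ and, for each pair $x\neq y$, joining them by a path of length $\tilde\rho(x,y)$ through fresh internal vertices. In this graph the distance between two marked vertices $x,y$ is $\min\{\tilde\rho(x,y),\min_z (\tilde\rho(x,z)+\tilde\rho(z,y)),\dots\}=\tilde\rho(x,y)$ by the triangle inequality for $\tilde\rho$, and distances involving internal vertices are controlled automatically. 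Thus the marked vertices of $G_m$, rescaled by $1/L$, form a metric space within distortion $1+O(1/L_m)$ of $N_m$; choosing $L_m\to\infty$ gives the family $\{G_m\}$.

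It remains to check that bilipschitz embeddability of the graphs $\{G_m\}$ into $Y$ with uniform distortion is equivalent to that of the nets $\{N_m\}$. If $\{N_m\}$ embed uniformly, one embeds each long subdivided edge of $G_m$ by interpolating linearly along the segment between the images of its endpoints in $Y$ (a segment exists because $Y$ is a Banach space), which distorts the path metric by at most a universal constant; conversely, restricting an embedding of $G_m$ to its marked vertices and rescaling by $1/L_m$ yields an embedding of (a space $(1+o(1))$-close to) $N_m$. The maximum degree is not yet bounded — each marked vertex has degree $k-1$ — which is why the theorem as stated does not claim bounded degree; Problem \ref{P:GraphTestSp}(b) and the abstract's degree-$3$ refinement presumably require an additional trick (replacing a high-degree vertex by a long path or a binary tree of fresh vertices and re-routing, at the cost of adjusting all target distances), and I expect that degree-reduction bookkeeping to be the main technical obstacle; for the present theorem, the main point to get right carefully is the equivalence of the two embeddability properties under the net-approximation and subdivision operations, i.e. verifying that all the error terms are genuinely $o(1)$ and that the "extend along segments'' and "restrict to marked vertices'' maps compose to something with uniformly controlled distortion.
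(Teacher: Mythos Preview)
Your proposal has two genuine gaps, and both occur at points you flag as ``standard'' but are not.

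\textbf{Gap 1: the linear-interpolation embedding of $G_m$.} You build $G_m$ by subdividing \emph{every} pair $\{x,y\}$ of points of $N_m$ into a path, and propose to embed $G_m$ into $Y$ by sending internal vertices of the $xy$-path onto the segment $[f(x),f(y)]$. This map is not bilipschitz: the segments corresponding to different pairs can collide. Concretely, take $x,z,y\in N_m$ nearly collinear in $X_m$ with $z$ close to the midpoint of $[x,y]$; then the internal vertex $u$ at the midpoint of the $xy$-path is sent to a point arbitrarily close to $f(z)$, whereas $d_{G_m}(u,z)\ge \min(\tilde\rho(x,z),\tilde\rho(z,y))$, so the co-Lipschitz bound blows up. Since a $\delta$-net of a convex body always contains such near-collinear triples, the construction fails uniformly. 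The paper avoids this by a completely different graph: it declares two net points adjacent precisely when $\|u-v\|\le 3\delta$, so that the \emph{identity map} $V(G)\hookrightarrow X$ is already bilipschitz with distortion $\le 3$ (Lemma~\ref{L:GraphInBall}); no interpolation is needed.

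\textbf{Gap 2: the extension/linearization step.} You assert that a bilipschitz embedding of a single $2^{-m}$-net $N_m$ of $B_{X_m}$ extends to a bilipschitz embedding of the whole ball ``by a standard Lipschitz-extension argument''. Lipschitz extension preserves the upper Lipschitz constant, but not the co-Lipschitz bound: at scales below $2^{-m}$ the extension is unconstrained, so the differentiation theorem you invoke cannot be applied. One net per space is not enough. The paper handles this by producing, for each fixed $X_m$, the \emph{entire sequence} $\{G(X_m,1/n,n)\}_{n=1}^\infty$ of graphs at all scales; embeddings $f_n$ of these are assembled via an ultrapower into a genuine bilipschitz embedding $F:X_m\to (Y)_{\mathcal U}$, to which differentiation and local reflexivity then apply.
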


By a $\delta${\it-net} in a metric space $Z$ we mean a collection
$U$ of elements of $Z$ satisfying the conditions:
\begin{itemize}
\item $\forall z\in Z~ \exists u\in U~ d_Z(u,z)\le\delta$. \item
$\forall u,v\in U~ d_Z(u,v)\ge\delta$.
\end{itemize}

\begin{lemma}\label{L:GraphInBall} For each finite-dimensional Banach space $X$ and each
pair $\delta,r$ satisfying $0<\delta<r<\infty$ there is a finite
unweighted graph $G=(V(G),E(G))$ and a map $f:V(G)\to rB(X)$
($rB(X)$ is a multiple of the unit ball) such that $f$ is a
bilipschitz embedding with distortion $\le 3$ and $f(V(G))$ is a
$\delta$-net in $rB(X)$ with distances $>\delta$ between images of
different vertices.
\end{lemma}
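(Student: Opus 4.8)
The plan is to build $G$ directly on a finite net inside $rB(X)$. First I would take $N$ to be a maximal subset of $rB(X)$ all of whose pairwise distances exceed $\delta$; since $X$ is finite dimensional, $rB(X)$ is compact, so $N$ is finite, and maximality forces every point of $rB(X)$ to lie within distance $\delta$ of $N$. Thus $N$ is already a $\delta$-net with the required strict separation between distinct points. I would then set $V(G)=N$, join two distinct vertices $u,v$ by an edge precisely when $\|u-v\|\le 3\delta$, and let $f\colon V(G)\to rB(X)$ be the identity inclusion. The statement about $f(V(G))$ is then automatic, and what remains is to show that $G$ is connected and that $f$ has distortion at most $3$. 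One half of the distance comparison is immediate: a geodesic in $G$ from $u$ to $v$ uses $d_G(u,v)$ edges, each joining two points at distance $\le 3\delta$ in $X$, so $\|u-v\|\le 3\delta\, d_G(u,v)$ by the triangle inequality.

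For the reverse comparison the plan is to prove, by induction on $k\ge 1$, that $\|u-v\|\le (k+2)\delta$ implies $d_G(u,v)\le k$. The case $k=1$ is the definition of the edge set. For the step, assume the claim for $k$ and take $u,v$ with $\|u-v\|\le(k+3)\delta$; if $\|u-v\|\le(k+2)\delta$ we are done by the inductive hypothesis, so suppose $(k+2)\delta<\|u-v\|\le(k+3)\delta$, which in particular gives $\|u-v\|>2\delta$. Choose the point $p$ on the segment $[u,v]$ with $\|u-p\|=2\delta$ (it lies in $rB(X)$ by convexity) and a net point $w$ with $\|p-w\|\le\delta$. Then $\delta\le\|u-w\|\le 3\delta$, so $w\ne u$ and $\{u,w\}\in E(G)$, while $\|w-v\|\le\|w-p\|+\|p-v\|\le\delta+\bigl(\|u-v\|-2\delta\bigr)\le(k+2)\delta$, so $d_G(w,v)\le k$ by the inductive hypothesis. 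Hence $d_G(u,v)\le k+1$. In particular $G$ is connected.

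To extract the distortion, note that if $d_G(u,v)=1$ then $\delta<\|u-v\|\le 3\delta$ by the separation of $N$ together with the edge rule, whereas if $d_G(u,v)=m\ge 2$ the contrapositive of the claim with $k=m-1$ gives $\|u-v\|>(m+1)\delta>m\delta$. In either case $\delta\, d_G(u,v)<\|u-v\|\le 3\delta\, d_G(u,v)$ for all distinct $u,v$, so $f$ is injective with $\mathrm{Lip}(f)\le 3\delta$ and $\mathrm{Lip}(f^{-1})\le 1/\delta$, i.e.\ distortion at most $3$.

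The step I expect to be the real obstacle is pinning the constant at $3$. A naive chain of net points marching along the straight segment $[u,v]$ in steps of about $\delta$ has interior links that can wobble by $\delta$ at both ends, so with an edge threshold of $3\delta$ such an argument only yields distortion roughly $4$, and cruder arguments do worse. The induction above avoids this by peeling off a piece of length exactly $2\delta$ at one end at each stage, so that the terminal single edge absorbs $2\delta$ of length while every other step of the path costs only $\delta$; matching this against the trivial bound $\|u-v\|\le 3\delta\,d_G(u,v)$ is precisely what forces the distortion down to $3$. Getting the bookkeeping right — the threshold $3\delta$, the offset $2\delta$, and the induction hypothesis $\|u-v\|\le(k+2)\delta$ — is the delicate part.
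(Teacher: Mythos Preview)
Your proof is correct and follows essentially the same route as the paper: the construction (a maximal $\delta$-separated set with edges at threshold $3\delta$), the key lemma (step $2\delta$ along the segment $[u,v]$ and approximate by a net point within $\delta$), and the induction are all the same. The only cosmetic difference is that the paper states the inductive claim as ``a $uv$-path of length $\le\lfloor\|u-v\|/\delta\rfloor$ exists'' rather than your formulation ``$\|u-v\|\le(k+2)\delta\Rightarrow d_G(u,v)\le k$'', which is in fact slightly sharper but yields the same distortion bound.
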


\begin{remark} It is interesting to compare this lemma with the observation that an unweighted graph $G$ which admits
an isometric embedding into a strictly convex Banach space should
be either a path or a complete graph. Most probably this
observation is known. I enclose a proof of it in Section
\ref{S:StricConv} because I have not found it in the literature.
\end{remark}

\begin{proof}[Proof of Lemma \ref{L:GraphInBall}]
Since $X$ is finite-dimensional, there is a finite $\delta$-net
$V=\{v_i\}_{i=1}^n$ in $rB(X)$ with $||v_i-v_j||>\delta$. We
introduce a graph structure on $V$ using the following rule:
vertices $v_i$ and $v_j$ are adjacent if and only if
$||v_i-v_j||\le 3\delta$. Denote the obtained graph by
$G=G(X,\delta,r)$.
\medskip

Let $f:V\to X$ be the natural  embedding (that is, embedding which
maps each vertex onto itself). It is clear that $\lip(f)\le
3\delta$ and that $f(V)$ is a $\delta$-net in $rB(X)$. The only
condition which is to be verified is $\lip(f^{-1})\le\delta^{-1}$.
It suffices to show that for each pair $u,v\in V$ satisfying
$||u-v||=d\delta$, there is a $uv$-path having at most $\lfloor
d\rfloor$ edges.
\medskip

If $1< d\le 3$, the statement is obvious since $u$ and $v$ are
adjacent  (observe that $d$ cannot be $\le 1$). If $d>3$ we use
the following lemma.

\begin{lemma}\label{L:ShortInNet} Let $u,v\in V$ be such that
$||u-v||=d\delta>3\delta$. Then there is a vertex $u_1\in V$
satisfying $||u-u_1||\le 3\delta$ and $||u_1-v||\le (d-1)\delta$.
\end{lemma}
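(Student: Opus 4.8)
The plan is to take the straight line segment from $u$ to $v$ and move along it by a distance of exactly $\delta$ (or slightly more, enough to leave the ball-of-radius-$\delta$ around $u$ but not so much that we overshoot), then use the net property to find a net point $u_1$ close to that intermediate point. Concretely, set $w = u + \frac{\delta}{||u-v||}(v-u)$, so that $||u-w|| = \delta$ and $||w-v|| = (d-1)\delta$; this uses $d > 3 > 1$, so $w$ is genuinely between $u$ and $v$. Since $V$ is a $\delta$-net in $rB(X)$, and $w \in rB(X)$ (it lies on a segment between two points of $rB(X)$, which is convex), there is a vertex $u_1 \in V$ with $||w - u_1|| \le \delta$.

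Then I would estimate by the triangle inequality: $||u - u_1|| \le ||u-w|| + ||w-u_1|| \le \delta + \delta = 2\delta \le 3\delta$, which gives the first desired bound. For the second bound, $||u_1 - v|| \le ||u_1 - w|| + ||w - v|| \le \delta + (d-1)\delta = d\delta$; but this only gives $d\delta$, not $(d-1)\delta$, so the naive triangle inequality is too lossy by one unit. This is the main obstacle, and the fix is to move further along the segment than $\delta$. Instead pick $w = u + t(v-u)$ with $\frac{t \cdot ||u-v||}{} = $ something like $2\delta$: take $||u - w|| = 2\delta$, i.e. the point on the segment at distance $2\delta$ from $u$ (legitimate since $d\delta > 3\delta > 2\delta$), so $||w - v|| = (d-2)\delta$. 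Then $u_1$ with $||w-u_1|| \le \delta$ gives $||u - u_1|| \le 2\delta + \delta = 3\delta$ and $||u_1 - v|| \le \delta + (d-2)\delta = (d-1)\delta$, exactly as required.

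So the key steps, in order, are: (1) use convexity of $rB(X)$ to locate the point $w$ on the segment $[u,v]$ at distance exactly $2\delta$ from $u$, noting $w\in rB(X)$; (2) invoke the $\delta$-net property of $V$ to get $u_1\in V$ with $||w-u_1||\le\delta$; (3) apply the triangle inequality twice to get $||u-u_1||\le 3\delta$ and $||u_1-v||\le(d-1)\delta$. The one subtlety to watch is that we need $d\delta \ge 2\delta$ so that the point $w$ lies on the actual segment between $u$ and $v$ (not past $v$), which is guaranteed by the hypothesis $d > 3$. There are no real computational difficulties beyond choosing the right step length $2\delta$ rather than the tempting but insufficient $\delta$.
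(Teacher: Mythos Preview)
Your proof is correct and is essentially identical to the paper's: pick $w$ on the segment $[u,v]$ at distance $2\delta$ from $u$, use the $\delta$-net property to find $u_1\in V$ with $\|w-u_1\|\le\delta$, and apply the triangle inequality twice. Your explicit remark that $w\in rB(X)$ by convexity is a detail the paper leaves implicit.
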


\begin{proof} Let $w$ be the point satisfying $||u-w||=2\delta$ and belonging to the line segment joining $u$
and $v$. Then there is $u_1\in V$ satisfying $||u_1-w||\le\delta$.
By the triangle inequality we have $||u-u_1||\le 3\delta$ and
$||v-u_1||\le||v-w||+||u_1-w||\le
d\delta-2\delta+\delta=(d-1)\delta$.
\end{proof}

We complete the proof of Lemma \ref{L:GraphInBall} using
induction. We know that the claim holds when $||u-v||<3\delta$.
\medskip

\noindent{\bf Induction Hypothesis:} The claim holds when
$||u-v||<n\delta$.
\medskip

Now assume that $n\delta\le||u-v||<(n+1)\delta$. We apply Lemma
\ref{L:ShortInNet} and get $u_1$ satisfying $||u-u_1||\le 3\delta$
and $||u_1-v||\le ||u-v||-\delta<n\delta$. By the Induction
Hypothesis there is a $u_1v$-path of length $\le \lfloor
||u_1-v||/\delta\rfloor$. Also $u$ and $u_1$ are adjacent in $G$.
Adding this edge to the $u_1v$-path we get a $uv$-path of length
$\le \lfloor ||u_1-v||/\delta\rfloor+1\le\lfloor
||u-v||/\delta\rfloor$.\end{proof}

\begin{proof}[Proof of Theorem \ref{T:FinDimFinGr}] Our purpose is
to show that the countable collection
\[\{G(X_m,1/n,n)\}_{m,n=1}^\infty\] of graphs
 is the desired collection of
test-spaces, where $\{X_m\}_{m=1}^\infty$ are
finite-di\-men\-sio\-nal Banach test-spaces for
$\mathcal{P}$.\medskip

It is clear that the spaces $\{G(X_m,\frac1n,n)\}_{m,n=1}^\infty$
admit uniformly bilipschitz embeddings into any Banach space
admitting uniformly bilipschitz embeddings of
$\{X_m\}_{m=1}^\infty$.
\medskip

It remains to show that if a Banach space $Y$ admits uniformly
bilipschitz embeddings of $\{G(X_m,\frac1n,n)\}_{m,n=1}^\infty$,
then there are uniformly isomorphic embeddings of the Banach
spaces $\{X_m\}$ into $Y$. This can be proved in the following way
(the author learned this argument from G.~Schechtman, see
\cite[Proposition 4.2]{Bau09+}).\medskip

Fix $m\in\mathbb{N}$. Let $f_n:G(X_m,\frac1n,n)\to Y$ be uniformly
bilipschitz embeddings. We may assume that there is $0<C<\infty$
such that $d_n(u,v)\le ||f_n(u)-f_n(v)||\le Cd_n(u,v)$ for all
vertices $u,v$ of $G(X_m,\frac1n,n)$, where $d_n$ is the graph
distance of $G(X_m,\frac1n,n)$. We may and shall assume that the
zero element of $X_m$ is a vertex of $G(X_m,\frac1n,n)$, and that
$f_n(0)=0$, where the first $0$ is the zero element of $X_m$ and
the second $0$ is the zero element in $Y$.
\medskip

We use the embeddings $f_n$ to find a $C$-bilipschitz embedding of
$X_m$ into an ultrapower of $X$. For each $y$ in $X_m$ we
introduce the following sequence $y_n\in V(G(X_m,\frac1n,n))$,
$n\in\mathbb{N}$:
\begin{equation}\label{E:y_n}y_n=\begin{cases} 0 & \hbox{ if } ||y||>n\\
\hbox{best approximation of $y$ by elements of
$V(G(X_m,\frac1n,n))$}& \hbox{ if } ||y||\le n.
\end{cases}\end{equation}
In this definition, we pick one of the best approximations if
there are several of them.
\medskip

Let $\mathcal{U}$ be a free ultrafilter on $\mathbb{N}$. It is
easy to check that the mapping $F:X_m\to Y$ given by
\[F(y)=\left\{\frac 1n\,f_n(y_n)\right\}_{n=1}^\infty\]
is a $3C$-bilipschitz embedding of $X_m$ into $(Y)_\mathcal{U}$
and thus into the second dual $((Y)_\mathcal{U})^{**}$. By
\cite[Theorem 7.9 and Corollary 7.10]{BL00} (these results go back
to \cite{HM82}), this implies an existence of $C$-isomorphic
embedding of $X_m$ into $((Y)_\mathcal{U})^{**}$. Using the local
reflexivity (\cite{LR69} and \cite{JRZ71}) and standard properties
of ultraproducts (see \cite{DK72} or \cite{DJT95}) we get that
$\{X_m\}_{m=1}^\infty$ are uniformly isomorphic to subspaces of
$Y$.
\end{proof}

\section{Test-spaces with uniformly bounded degrees}

It is easy to see that for a fixed finite-dimensional space $X$
the graphs $\{G(X,\frac1n,n)\}$ have uniformly bounded degrees,
but when we consider a family of graphs corresponding to spaces
$\{X_m\}$ with growing dimensions, we get graphs with no uniform
bound on degrees. Therefore Problem \ref{P:GraphTestSp}{\bf (b)}
is nontrivial in this case. However, the answer to it is positive:

\begin{theorem}\label{T:BoundDegr} Let $\{X_m\}_{m=1}^\infty$ be a
sequence of finite-dimensional normed spaces satisfying
$\sup_{m}\dim X_m=\infty$. Then there exists a sequence
$\{H_n\}_{n=1}^\infty$ of finite unweighted graphs with maximum
degree $3$ such that a Banach space $Y$ admits uniformly
bilipschitz embeddings of $\{H_n\}_{n=1}^\infty$ if and only if
$Y$ admits uniformly bilipschitz (or uniformly isomorphic)
embeddings of $\{X_m\}_{m=1}^\infty$.
\end{theorem}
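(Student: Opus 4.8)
The plan is to start from the graphic test-spaces provided by Theorem \ref{T:FinDimFinGr} and perform a degree reduction. The proof of Theorem \ref{T:FinDimFinGr} shows that the graphs $G_{m,n}:=G(X_m,1/n,n)$ form a countable family of test-spaces for $\mathcal{P}$; they are connected, but their degrees grow with $\dim X_m$. A key preliminary observation is that the maximum degree $D_m$ of $G_{m,n}$ depends only on $X_m$, not on $n$: after rescaling it equals the number of points of the (maximal $1$-separated) net lying in the shell $\{1<\|x-v\|\le 3\}$, which by a volume/packing estimate is at most $7^{\dim X_m}$. The graphs $H_n$ will be obtained by replacing each $G_{m,n}$ with a bounded-degree graph $H_{m,n}$ and relabelling the doubly indexed family.

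To build $H_{m,n}$: fix a proper edge-colouring of $G_{m,n}$ with $D_m+1$ colours (Vizing's theorem); replace each vertex $v$ by a path $P_v=v^{(1)}-\dots-v^{(D_m+1)}$, re-attaching at $v^{(c)}$ the (unique) edge of $G_{m,n}$ at $v$ of colour $c$; finally subdivide every re-attached edge into a path ("arm") of length $k_{m,n}$, where $k_{m,n}\ge D_m^2$ is an integer to be chosen large. Then $H_{m,n}$ is a finite connected graph of maximum degree $3$, and it ``contains'' $G_{m,n}$: the map $v\mapsto v^{(1)}$ is a bilipschitz embedding of the scaled graph $k_{m,n}G_{m,n}$ into $H_{m,n}$ with distortion at most $3$, since each gadget $P_v$ has diameter $D_m\ll k_{m,n}$ and since any $H_{m,n}$-geodesic projects onto a walk in $G_{m,n}$ traversing each used arm completely (this gives the lower estimate). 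Hence if $Y$ admits uniformly bilipschitz embeddings of $\{H_{m,n}\}$ it admits such embeddings of $\{G_{m,n}\}$, hence $Y\notin\mathcal{P}$, i.e.\ $Y$ admits uniformly isomorphic embeddings of $\{X_m\}$; the equivalence of ``uniformly bilipschitz'' and ``uniformly isomorphic'' for $\{X_m\}$ is obtained exactly as at the end of the proof of Theorem \ref{T:FinDimFinGr}.

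For the converse I would prove that, for a suitable choice of $k_{m,n}$, $H_{m,n}$ embeds bilipschitzly into $X_m$ with distortion bounded by an absolute constant $C$; then finite representability of $\{X_m\}$ in a space $Y\notin\mathcal P$ yields uniformly bilipschitz embeddings of $\{H_{m,n}\}$ into $Y$. The embedding $g\colon V(H_{m,n})\to X_m$ uses the length scale $s\asymp 1/(nk_{m,n})$ (adjacent net points, at distance $\asymp 1/n$, become $H_{m,n}$-distance $\asymp k_{m,n}$). One places the gadget $P_v$ as a short segment through $v$, say $v^{(c)}\mapsto v+sce_v$ for a unit vector $e_v$, and routes the colour-$c$ arm joining $v$ and $w$ along a polygonal curve $v^{(c)}\to P_{v,c}\to P_{w,c}\to w^{(c)}$ through auxiliary ``launch pads'' $P_{v,c}$ near $v$ and $P_{w,c}$ near $w$, chosen so that: the launch-pad directions at $v$ are pairwise well-separated, each makes a bounded angle with $e_v$, none nearly lies in the span of $e_v$ with another such direction, and the arms of distinct edges stay pairwise well-separated except near a common endpoint. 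One then checks that $g$ is $C$-bilipschitz by a case analysis (arm-vertex versus gadget-copy of the same $v$; two arm-vertices at a common vertex; arm-vertices of distinct edges; ordinary net vertices), in each case these angular and separation properties preventing the near-collisions that would otherwise blow up the distortion.

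The main obstacle is precisely this last embedding. The naive attempt — collapsing each gadget to a point and routing arms as straight chords — fails badly: two copies of one vertex would coincide although they are up to $D_m$ apart in $H_{m,n}$; an arm leading to a neighbour lying ``behind'' the gadget (in the $-e_v$ direction) would sweep through the gadget; and arms of distinct edges that cross in $X_m$ bring $H_{m,n}$-far points arbitrarily close. Overcoming this forces one to spread the gadget out and to route arms cleverly, and the crucial point is that all the required geometric constraints (separated launch pads, separated arms, $e_v$ transverse to the relevant spans) are simultaneously satisfiable with constants independent of $m,n$. This rests on the packing estimate $D_m\le 7^{\dim X_m}$: although $D_m$ may be enormous relative to $\dim X_m$, a ball of $X_m$ still has enough room to place $\lesssim 7^{\dim X_m}$ points (or short curves) with pairwise separation bounded below by an absolute constant times the ambient scale. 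Finally, the hypothesis $\sup_m\dim X_m=\infty$ is what makes the reduction nontrivial in the first place: when the dimensions stay bounded, $D_m$ is bounded and Theorem \ref{T:FinDimFinGr} already produces bounded-degree test-spaces.
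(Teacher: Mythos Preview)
Your overall strategy matches the paper's almost step for step: replace each vertex by a short path, subdivide each edge into a long path, check that $G$ sits bilipschitzly inside the resulting graph $H$ with absolute-constant distortion, and then embed $H$ back into (essentially) $X_m$. Two differences are worth noting. First, you label the short-path vertices by Vizing colours (length $D_m+1$), whereas the paper simply labels them by edges (length $e(G)$); your choice is tidier but changes nothing essential. Second, and more significantly, the paper does \emph{not} embed $H$ into $X_m$ directly: it embeds into $X_m\oplus_1\mathbb{R}$, sending the $i$-th vertex of each short path to level $i$ in the extra coordinate. This is legitimate because $\sup_m\dim X_m=\infty$ forces $\{X_m\oplus_1\mathbb{R}\}$ to be uniformly finitely representable wherever $\{X_m\}$ is. That one extra real coordinate cleanly separates the gadget levels and eliminates exactly the interactions you worry about (gadget segment versus arms, ``arm sweeping through the gadget from behind''). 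Your scheme of placing the gadget along a direction $e_v$ inside $X_m$ and demanding angular/transversality conditions on the launch-pad directions can in principle be pushed through, but it piles additional constraints on top of the arm-collision problem; the paper's $\oplus_1\mathbb{R}$ trick sidesteps all of that.

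The part you correctly flag as the main obstacle---routing the subdivided edges in $X_m$ so that arms of distinct edges stay uniformly separated---is precisely the content of the paper's Lemma~\ref{L:EmbedEdges}, and it is the real work of the proof. The paper handles it by mapping each edge $t(uv)$ to a two-segment broken line $[u,w]\cup[w,v]$ with $w$ chosen in a small ball around the midpoint, and then running a quantitative volume argument (under the hypothesis $\dim X\ge 3$) to show that the set of ``bad'' choices of $w$---those causing a violation of carefully specified separation conditions $(\alpha)$, $(\beta)$, $(\gamma)$---has volume strictly less than that of the ball. Your appeal to the packing bound $D_m\le 7^{\dim X_m}$ is the right heuristic, but it is not a proof: the argument really requires comparing volumes of thin tubular neighbourhoods against $\mu^n$, and the constants have to be tracked to see that everything closes up independently of $n=\dim X_m$. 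As written, your proposal identifies the right mechanism but leaves this computation (and hence the heart of the theorem) unverified.
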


\begin{remark} It is easy to see that the result remains true if
$\sup_m\dim X_m<\infty$, but this case seems to be of little
interest.
\end{remark}

The general scheme of the proof of Theorem \ref{T:BoundDegr} is
the same as in \cite[Theorem 2.1]{Ost10+}. The main step in the
proof of Theorem \ref{T:BoundDegr} is the following lemma (its
analogues for the cases considered in \cite{Ost10+} were much
easier).

\begin{lemma}\label{L:EmbedEdges} Let $X$ be a finite-dimensional normed space with $\dim X\ge 3$ and let
$G=G(X,\delta,r)$, $0<\delta<r$, be a graph defined in the proof
of Lemma \ref{L:GraphInBall}. Let $M\in\mathbb{N}$ and let $MG$ be
the graph obtained from $G$ if each edge is replaced by a path of
length $M$. Then the graphs $\{MG\}_{M=1}^\infty$ admit uniformly
bilipschitz embeddings into $X$. Furthermore, there is an upper
bound on distortion which is an absolute constant.
\end{lemma}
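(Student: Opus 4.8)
The plan is to build an explicit map $g:V(MG)\to X$ that agrees with the natural embedding $f:V(G)\to rB(X)$ on the original vertices of $G$, and that spreads the $M-1$ subdivision vertices of each edge $\{u,v\}$ along (a curve near) the line segment $[f(u),f(v)]$ in $X$. Since $f$ has distortion $\le 3$ in $G$, an edge $\{u,v\}$ has $\delta<\|u-v\|\le 3\delta$, so the segment has length $\le 3\delta$, and placing the $M-1$ interior vertices at the equally spaced points $u + \frac{j}{M}(v-u)$, $j=1,\dots,M-1$, makes consecutive images at distance $\le 3\delta/M$ while the graph distance between consecutive subdivision vertices is $1$. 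One must check this is well-defined when two edges of $G$ share a vertex $w$: the path-interiors only meet at the common endpoint $w$, and different edges may send their first interior vertex to nearly the same point, but since we only need a \emph{bilipschitz} embedding, not an injective net, a tiny generic perturbation (or simply accepting the images coincide, after which one argues with the quotient metric) is harmless; I would note that $\dim X\ge 3$ gives enough room to perturb the interior vertices off each other while keeping all pairwise distances controlled.

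Next I would estimate the distortion of $g$. \textbf{Upper Lipschitz bound (expansion).} For adjacent vertices of $MG$, $\|g(a)-g(b)\|\le 3\delta/M$, and scaling $X$ by $M/\delta$ (equivalently, replacing the target norm) turns this into $\le 3$; so $\lip(g)\le 3$ after normalization, independent of everything. \textbf{Lower Lipschitz bound (non-contraction).} This is the heart of the matter. Take $a,b\in V(MG)$; say $a$ lies on the subdivided edge with endpoints $u,u'$ at parameter $i/M$ and $b$ on the edge with endpoints $v,v'$ at parameter $j/M$. I would compare $d_{MG}(a,b)$ with $M\cdot d_G(\bar a,\bar b)$ where $\bar a,\bar b$ are the nearest original vertices, using the fact (from the proof of Lemma \ref{L:GraphInBall}) that $d_G$ is comparable to the $X$-distance scaled by $1/\delta$: concretely, for original vertices $p,q$ one has $\delta\, d_G(p,q)\le 3\|p-q\|$ and $\|p-q\|\le\delta\, d_G(p,q)$. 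Combining, $d_{MG}(a,b)$ is within an additive $O(M)$ (from the two partial edge-traversals) of $M\,d_G(\bar a,\bar b)$, which in turn is within a multiplicative constant of $(M/\delta)\|\bar p-\bar q\|$, and finally $\|\bar p-\bar q\|$ differs from $\|g(a)-g(b)\|$ by at most the two half-edge lengths $\le 3\delta$ each, i.e. by $O(\delta)$. Chasing these comparisons shows $\|g(a)-g(b)\|\ge c\,(\delta/M)\, d_{MG}(a,b)$ for an absolute $c>0$; after the normalization by $M/\delta$ this reads $\|g(a)-g(b)\|\ge c\,d_{MG}(a,b)$.

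The main obstacle I anticipate is precisely this lower bound when $a$ and $b$ are both \emph{interior} vertices lying on short edges that are close in $X$ but far in the graph $MG$ (for instance, two interior points on edges incident to a common vertex $w$, lying on ``opposite sides'' of $w$): here $\|g(a)-g(b)\|$ can be as small as $O(\delta)$ while $d_{MG}(a,b)$ can be as large as $2M$, and one needs the normalization and the comparison constants to align so that the ratio stays bounded below. The key point making it work is that $d_{MG}(a,b)\le i + j + M\,d_G(u,v)$ (walk back to $w=u'=v$, say, then across), together with the \emph{lower} estimate $d_{MG}(a,b)\ge M\,d_G(u,v) - i - j$ when the $G$-geodesic must be used; pairing this with $d_G(u,v)\le (3/\delta)\|u-v\|$ and $\|u-v\|$'s closeness to $\|g(a)-g(b)\|+(i+j)\delta/M$ yields the bound. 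I would isolate this case-analysis as the one genuinely delicate computation and present the rest (well-definedness, the trivial expansion bound, the reduction to Lemma \ref{L:GraphInBall}'s distance comparison) briskly. Finally, since the constant $c$ and the bound $3$ depend only on the numerology above and never on $X$, $\delta$, $r$, or $M$, the distortion bound is an absolute constant, as required.
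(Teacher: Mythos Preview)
Your proposal has a genuine gap at precisely the step you dismiss as ``harmless.'' The straight-line-segment map $g$ need not be bilipschitz because segments $[u,v]$ and $[u_1,v_1]$ corresponding to \emph{non-adjacent} edges of $G$ can intersect in $X$. For instance, with $\delta=1$ take net points $u=(-1,0,0)$, $v=(1,0,0)$, $u_1=(0,-1,0)$, $v_1=(0,1,0)$ (extendable to a full $1$-net): all pairwise distances exceed $1$, both $\|u-v\|$ and $\|u_1-v_1\|$ are at most $3$, and the segments cross at the origin. The midpoint subdivision vertices $a$ on $uv$ and $b$ on $u_1v_1$ then satisfy $\|g(a)-g(b)\|=0$ while $d_{MG}(a,b)\ge M$, so no lower Lipschitz bound is possible. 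Your chain of inequalities for the lower bound collapses here: it yields only $d_{MG}(a,b)\le 4(i'+j')+(M/\delta)\|g(a)-g(b)\|$ with $i',j'$ the distances to the nearer endpoints, and since $i'+j'$ can be as large as $M$ this gives nothing. The ``main obstacle'' you anticipate---edges sharing a common vertex---is in fact the benign case; the real obstruction is crossings between unrelated edges.

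The remedy is not a ``tiny generic perturbation.'' To separate such crossings one must move the curves by an amount comparable to $\delta$, and one must do so for every edge without creating new near-crossings elsewhere---a global constraint whose resolution is the entire content of the paper's proof. The paper replaces each segment $[u,v]$ by a broken line $[u,w]\cup[w,v]$ with $w$ chosen in the ball of radius $\mu=\tfrac14$ about the midpoint, and runs a volume argument (this is where $\dim X\ge3$ is actually used) showing that the set of $w$ for which the broken line violates any of three quantitative separation conditions $(\alpha),(\beta),(\gamma)$ relative to previously embedded edges has volume strictly less than $\vol(B(z,\mu))$; hence a good $w$ exists at each step. The thresholds $\alpha,\beta,\gamma$ are absolute constants, and the final Lipschitz estimate for the inverse is a case analysis against these thresholds, yielding an absolute distortion bound. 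Your outline is right that the upper Lipschitz bound is trivial and that the lower bound is the crux, but the mechanism you propose for the lower bound does not survive the crossing obstruction, and the perturbation you wave at is exactly the hard work the lemma requires.
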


\begin{proof}[First we prove Theorem \ref{T:BoundDegr} using Lemma
\ref{L:EmbedEdges}] Since the sequence $\{\dim X_m\}_{m=1}^\infty$
is unbounded, a Banach space which admits uniformly isomorphic
embeddings of $\{X_m\}_{m=1}^\infty$, admits uniformly isomorphic
embeddings of the sequence
$\{X_m\oplus_1\mathbb{R}\}_{m=1}^\infty$. It is also clear that if
we drop from the sequence $\{X_m\}$ all spaces with $\dim X_m<3$,
this would not change the class of Banach spaces admitting
uniformly isomorphic embeddings of $\{X_m\}$. Therefore we may and
shall assume that Lemma \ref{L:EmbedEdges} is applicable to each
member of the sequence $\{X_m\}$.
\medskip

Our proof of Theorem \ref{T:FinDimFinGr} implies that it suffices
to show that for each graph $G=G(X,\delta,r)$ there exist a graph
$H$ and bilipschitz embeddings $\psi:G\to H$ and $\varphi:H\to
X\oplus_1\mathbb{R}$ such that their distortions are bounded from
above by absolute constants and the maximum degree of $H$ is $3$.
It is easy to see that it is enough to consider the case
$\delta=1$.
\medskip

Our construction can be described in the following way: First we
expand $G$ replacing each edge by a path of length $M$. We use the
term {\it long paths} for these paths, the number $M$ here is
chosen to be much larger than the number of edges of $G$ (actually
we can replace the number of edges of $G$ by a smaller number in
this argument, but we do not see reasons to work on this
modification now). Then, for each vertex $v$ of $G$, we introduce
a path $p_v$ in the graph $H$ (which we are constructing now)
whose length is equal to the number of edges of $G$, we call each
such path a {\it short path}. At the moment these paths do not
interact. We continue our construction of $H$ in the following
way. We label vertices of short paths in a monotone way by long
paths. ``In a monotone way'' means that the first vertex of {\bf
each} short path corresponds to the long path $p_1$, the second
vertex of {\bf each} short path corresponds to the long path $p_2$
etc. We complete our construction of $H$ introducing, for a long
path $p$ in $MG$ corresponding to an edge $uv$ in $G$, a path of
the same length in $H$ (we also call it {\it long}) which joins
those vertices of the short paths $p_u$ and $p_v$ which have label
$p$. There is no further interaction between short and long paths
in $H$. It is obvious that the maximum degree of $H$ is $3$.
\medskip

It remains to define embeddings $\psi$ and $\varphi$ and to
estimate their distortions.\medskip

To define $\psi$ we pick a long path $p$ in $MG$ (in an arbitrary
way) and map each vertex $u$ of $G$ onto the vertex in $H$ having
label $p$ in the short path $p_u$ corresponding to $u$. We have
$\lip(\psi)\le 2e(G)+M$, where $e(G)$ is the number of edges of
$G$. In fact, to estimate the Lipschitz constant it suffices to
find an estimate from above for the distances in $H$ between
$\psi(u)$ and $\psi(v)$ where $u$ and $v$ are adjacent vertices of
$G$. To see that $2e(G)+M$ provides the desired estimate we
consider the following three-stage walk from $\psi(u)$ from
$\psi(v)$:
\begin{itemize}
\item We walk from $\psi(u)$ along the short path $p_u$ to the
vertex labelled by the long path corresponding to the edge $uv$ in
$G$. \item Then we walk along the corresponding long path to its
end in $p_v$. \item We conclude the walk with the piece of the
short path $p_v$ which we need to traverse in order to reach
$\psi(v)$.
\end{itemize}
\smallskip

We claim that $\lip(\psi^{-1})\le M^{-1}$ (this gives an absolute
upper bound for the distortion of $\psi$ provided the quantity
$e(G)$ does not exceed $M$, the assumption ``$M$ is much larger
than $e(G)$'' made above is needed only if we would like to make
the distortion close to $1$). In fact, let $\psi(u)$ and $\psi(v)$
be two vertices of $\psi(V(G))$. We need to estimate $d_G(u,v)$
from below in terms of $d_H(\psi(u),\psi(v))$. Let
$P=\psi(u),w_1,\dots,w_n=\psi(v)$ be one of the shortest
$\psi(u)\psi(v)$-paths in $H$. Let $u,u_1,\dots,u_k=v$ be those
vertices of $G$ for which the path $P$ visits the corresponding
short paths $p_u,p_{u_1},\dots,p_{u_k}=p_v$. We list
$u_1,\dots,u_k$ in the order of visits. It is clear that in such a
case $u,u_1,\dots,u_k=v$ is a $uv$-walk in $G$. Therefore
$d_G(u,v)\le k$. On the other hand, in $H$, to move from one short
path to another, one has to traverse at least $M$ edges, therefore
$d_H(\psi(u),\psi(v))\ge kM$. This implies $\lip(\psi^{-1})\le
M^{-1}$.\medskip

Now we describe $\varphi:H\to X\oplus_1\mathbb{R}$. First we
recall that by Lemma \ref{L:EmbedEdges} there is a bilipschitz
embedding of $MG$ into $X$, we denote this embedding by
$\varphi_0$. We may and shall assume that $\lip(\varphi_0)\le 1$
and $\lip(\varphi_0^{-1})$ is bounded from above by an absolute
constant. Next, we number vertices along short paths using numbers
from $1$ to $e(G)$ in such a way that vertices numbered $1$
correspond to the same long path in the correspondence described
above.
\medskip

At this point we are ready to describe the action of the map
$\varphi$ on vertices of short paths. For vertex $w$ of $H$ having
number $i$ on the short path $p_u$ the image in
$X\oplus_1\mathbb{R}$ is $\varphi_0(u)\oplus i$ (here we use the
same notation $u$ both for a vertex of $G$ and the corresponding
vertex in $MG$).
\medskip

To map vertices of long paths of $H$ into $X\oplus_1\mathbb{R}$ we
observe that the numbering of vertices of short paths leads to a
one-to-one correspondence between long paths and numbers
$\{1,\dots,e(G)\}$. We define the map $\varphi$ on a long path
corresponding to $i$ by $\varphi(w)=\varphi_0(w')\oplus i$, where
$w'$ is the uniquely determined vertex in a long path of $MG$
corresponding to a vertex $w$ in a long path of $H$.
\medskip

The fact that $\lip(\varphi)\le 1$ follows immediately from the
easily verified claim that the distance between $\varphi$-images
of adjacent vertices of $H$ is at most $1$ (here we use
$\lip(\varphi_0)\le 1$).\medskip

We turn to an estimate of $\lip(\varphi^{-1})$. In this part of
the proof we assume that $M>2e(G)$. Let $w$ and $z$ be two
vertices of $H$. As we have already mentioned our construction
implies that there are uniquely determined corresponding vertices
$w'$ and $z'$ in $MG$.\medskip

Obviously there are two possibilities:
\smallskip

(1) $d_{MG}(w',z')\ge\frac12d_H(w,z)$. In this case we observe
that the definitions of $\varphi$ and of the norm on
$X\oplus_1\mathbb{R}$ imply that
\[||\varphi(w)-\varphi(z)||\ge||\varphi_0(w')-\varphi_0(z')||\ge
d_{MG}(w',z')/\lip(\varphi_0^{-1})\ge\frac12d_H(w,z)/\lip(\varphi_0^{-1}).\]

(2) $d_{MG}(w',z')<\frac12d_H(w,z)$. This inequality implies that
there is a path joining $w$ and $z$ for which the naturally
defined {\it short-paths-portion} is longer than the {\it
long-paths-portion}. The inequality $M>2e(G)$ implies that the
short-paths-portion of this path consists of one path of length
$>\frac12d_H(w,z)$. This implies that the difference between the
second coordinates of $w$ and $z$ in the decomposition
$X\oplus_1\mathbb{R}$ is $>\frac12d_H(w,z)$. Thus
$||\varphi(w)-\varphi(z)|| >\frac12d_H(w,z)$.
\medskip

Since $\lip(\varphi_0^{-1})\ge 1$ (this follows from the
assumption $\lip(\varphi_0)\le 1$), we get $\lip(\varphi^{-1})\le
2\lip(\varphi_0^{-1})$ in each of the cases (1) and (2).
\end{proof}

\section{Proof of Lemma \ref{L:EmbedEdges}}

\begin{proof}
It is clear that it suffices to consider the case $\delta=1$. It
is convenient to handle all $M\in \mathbb{N}$ simultaneously by
considering the following thickening of the graph $G=G(X,1,r)$
(see \cite[Section 1.B]{Gro93} for the general notion of
thickening). For each edge $uv$ in $G$ we join $u$ and $v$ with a
set isometric to $[0,1]$, we denote this set $t(uv)$. The
thickening $TG$ is the union of all sets $t(uv)$ (such sets can
intersect at their ends only) with the distance defined as the
length of the shortest curve joining the points.\medskip

In order to prove Lemma \ref{L:EmbedEdges} it suffices to show
that there is a bilipschitz embedding of $TG$ into $X$ with
distortion bounded by an absolute constant. This follows
immediately from the observation that the graph $MG$ with the
scaled distance $\frac1Md_{MG}(u,v)$ is isometric to a subset of
$TG$.
\medskip

Recall that vertices of $G$ are elements of $X$. The restriction
of our embedding of $TG$ into $X$ to $V(G)$ will be the identical
map. So we need to define the embedding for edges only. We start
by looking at the following straightforward approach: map $t(uv)$
onto the line segment $[u,v]$ in such a way that the point in
$t(uv)$ which is at distance $\alpha\in(0,1)$ from $u$ is mapped
onto the point $\alpha u+(1-\alpha)v\in[u,v]$ (where $[u,v]$
denotes the line segment joining $u$ and $v$ in $X$). It is clear
that in general this straightforward approach does not have to
work: it can happen that $[u,v]$ intersects the line segment
$[u_1,v_1]$ corresponding to some other edge. In such a case the
straightforward embedding is not bilipschitz. Recall also that we
need to bound the distortion of the bilipschitz embedding of $TG$
into $X$ by an absolute constant.
\medskip

It turns out that the following perturbation of the
straightforward construction works. Recall that $\delta=1$. Let
$\mu=\frac14$ and let $z$ be the midpoint of $[u,v]$. Let
$B(z,\mu)$ denote the ball of radius $\mu$ centered at $z$. Our
purpose if to show that $B(z,\mu)$ contains a point $w$, such that
mapping the edge $t(uv)$ onto the union of line segments $[u,w]$
and $[w,v]$, we get a bilipschitz embedding whose distortion is
bounded by an absolute constant. Here we mean the map which maps
the point of $t(uv)$ lying at distance $\alpha$ from $u$ onto the
point in the curve obtained by concatenation of the line segments
$[u,w]$ and $[w,v]$ which is at along-the-curve distance
$\alpha(||w-u||+||v-w||)$ from $u$.\medskip

The map defined in this way is $4$-Lipschitz because it is clear
that $||w-u||+||v-w||\le ||z-u||+\frac14+||v-z||+\frac14\le
3.5<4$.
\medskip

To get a suitable estimate for the Lipschitz constant of the
inverse map we need to find $w$ in such a way that the line
segments $[u,w]$ and $[w,v]$ do not pass ``too close'' to line
segments corresponding to other edges.
\medskip

We make the notion of ``not-too-close'' more precise as follows.
We pick three numbers $\alpha$, $\beta$, and $\gamma$ in the
interval $\left(0,\frac14\right)$ satisfying
$\alpha,\gamma<\beta<\mu$. Some more restrictions will be
specified later, we also will show that numbers satisfying the
restrictions exist and can be chosen independently of $X$ and $r$.
\medskip

The ``not-too-close'' condition will be understood in the
following way. We find curves corresponding to different edges one
by one. When we turn to the construction of the curve
corresponding to the edge $t(uv)$, we do this in such a way that
the following conditions are satisfied:

\begin{itemize}

\item[$(\alpha)$]\label{I:alpha} The intersection of the curve
corresponding to $t(uv)$ with $B(u,\beta)$ is a line segment, and
the distance from the intersection of this line segment with the
sphere $S(u,\beta)$ (of radius $\beta$ centered at $u$) to
intersections with $S(u,\beta)$ with the line segments
corresponding to previously embedded edges is at least $\alpha$;
and the same condition holds for $v$.

\item[$(\beta)$] \label{I:beta} The curve corresponding to $t(uv)$
does not intersect $B(\widetilde u,\beta)$ for all vertices
$\widetilde u$ other than $u$ and $v$.

\item[$(\gamma)$] \label{I:gamma} The $\gamma$-neighborhoods of
those parts of curves corresponding to edges which are not in the
$\beta$-balls centered at vertices do not intersect curves
corresponding to other edges.

\end{itemize}

The first part of our proof consists of showing that if the
numbers $\alpha$, $\beta$, and $\gamma$ are sufficiently small
(but this ``smallness'' is independent on $X$ and its dimension
provided $\dim X\ge3$) and satisfy certain relations, then the set
of points $x\in B(z,\mu)$ for which at least one of the line
segments $[u,x]$ and $[x,v]$ does not meet the conditions above
does not exhaust $B(z,\mu)$. More precisely, we show that the
volume of the set of not-suitable points is smaller than the
volume of $B(z,\mu)$.
\medskip

The second part of the proof consists in showing that conditions
$(\alpha)$--$(\gamma)$ imply an absolute-constant upper estimate
of the distortion of the constructed map of $TG$ into $X$.
\medskip

Let $[u,\widetilde w]$ be one of the line segments in the image of
an already embedded edge $t(u\widetilde v)$ (recall that we map
each edge onto a union of two line segments). Our first goal is to
estimate from above the volume of the set of those points $x\in
B(z,\mu)$ for which the line segments $[u,x]$ and $[u,\widetilde
w]$ violate the condition $(\alpha)$. Observe that
\begin{equation}\label{E:LengthX-U}\frac12||v-u||-\mu\le ||x-u||\le\frac12||v-u||+\mu<2.\end{equation} If
condition $(\alpha)$ is violated, then
$||x-y||\le\alpha\cdot\frac{||x-u||}\beta$ for some $y$ in the ray
$\overrightarrow{u\widetilde w}$ (this is our notation for the ray
which starts at $u$ and passes through $\widetilde w$) satisfying
$||y-u||=||x-u||$. Therefore all vectors $x\in B(z,\mu)$ which are
``too close'' to the ray $\overrightarrow{u\widetilde w}$, are
contained in the $\frac{2\alpha}\beta$-neighborhood $T_1$ of a
line segment of length at most $2\mu$ (contained in the ray
$\overrightarrow{u\widetilde w}$).\medskip

\noindent{\bf Convention.} Everywhere in this proof by a {\it
volume} of a subset of $X$ we mean its Haar measure normalized in
such a way that the volume of the unit ball $B(0,1)$ is equal to
$1$.
\medskip

To estimate the volume of the neighborhood $T_1$ we observe that a
line segment of length $2\mu$ has a $\frac2\beta\alpha$-net of
cardinality $\left\lceil\frac{\mu\beta}{2\alpha}\right\rceil$. As
we shall see later, we have enough freedom in choosing $\alpha$,
to assume that $\frac{\mu\beta}{2\alpha}$ is an integer.
\medskip

The triangle inequality implies that the union of balls with radii
$\frac{4\alpha}{\beta}$ centered at all elements of the net covers
$T_1$. Hence
\[\vol(T_1)\le\frac{\mu\beta}{2\alpha}\cdot\left(\frac{4\alpha}\beta\right)^n=
2\mu\cdot\left(\frac{4\alpha}{\beta}\right)^{n-1},\] where $n$ is
the dimension of $X$.\medskip

Now we estimate from above the number of sets $T_1$ of the
described type which should be avoided when we try to find a
suitable location for $t(uv)$. It is clear that the number of such
sets is estimated from above by the degree of $u$ in $G$. The
estimate of the degree is standard (see e.g. \cite[Lemma
2.6]{MS86}): We need to estimate from above the number $N$ of
$1$-separated points in a ball of radius $3$ (see the definition
of $G(X,\delta,r)$). Interiors of balls of radii $\frac12$
centered at $1$-separated points do not intersect and are inside
the ball of radius $3+\frac12=\frac72$. Hence
$N\left(\frac12\right)^n\le\left(\frac72\right)^n$ and $N\le 7^n$.
\medskip

Thus the volume of the set which we have to exclude from
$B(z,\mu)$ in order to satisfy the condition $(\alpha)$ for both
$u$ and $v$ is
\[\le 2\cdot7^n\cdot(2\mu)\cdot\left(\frac{4\alpha}{\beta}\right)^{n-1}.\]

We would like this quantity to be less than
$\frac14\vol(B(z,\mu))$. This is achieved if
\[\frac14\mu^n\ge2\cdot7^n\cdot(2\mu)\cdot\left(\frac{4\alpha}{\beta}\right)^{n-1}.\]
Since $n\ge 3$ and $\mu=\frac14$, it is easy to verify that any
pair $\alpha$, $\beta$ satisfying
\begin{equation}\label{E:alphaEx}
\alpha\le\frac1{1232}\,\beta,
\end{equation}
satisfies the condition above.\medskip

Now we estimate from above the volume of the set of points $x\in
B(z,\mu)$ for which the curve obtained by concatenation of the
line segments $[u,x]$ and $[x,v]$ does not satisfy the condition
$(\beta)$. Recall that any vertex $y$ of $G=G(X,1,r)$ which is
different from $u$, is not contained in $B(u,1)$. Therefore, if
$||y-\widetilde x||\le\beta$ for some $\widetilde x\in[u,x]$, then
$||\widetilde x-u||\ge(1-\beta)$. Since $||x-u||\le 2$ (see
\eqref{E:LengthX-U}), we get that the distance between $x$ and
some point on the ray $\overrightarrow{uy}$ is
$\le\frac2{1-\beta}\cdot\beta$. Therefore the set of points in
$B(z,\mu)$ which violates the condition $(\beta)$ for given $y\in
V(G)$ is covered by $\frac2{1-\beta}\cdot\beta$-neighborhood $T_2$
of a line segment of length
$2\mu+2\left(\frac{2\beta}{1-\beta}\right)$. Our estimate of
$\vol(T_2)$ is similar to the estimate of $\vol(T_1)$. Let us
sketch it briefly. To simplify the estimate we assume that
$\beta<\mu/5$. In such a case
$2\mu+2\left(\frac{2\beta}{1-\beta}\right)< 3\mu$ and
$\left(\frac{2\beta}{1-\beta}\right)<3\beta$. So it suffices to
estimate the volume of $3\beta$-neighborhood of the line segment
of length $3\mu$. In the same way as before we get the estimate
$\le\frac{3\mu}{6\beta}\cdot(6\beta)^n$.\medskip

We need to estimate the number of vertices $y$ for which such sets
have to be excluded. Here, for simplicity we can use the same
estimate $\le 7^n$ because it is clear that a vertex whose
distance from $u$  in $X$ is $>3$ cannot ``stay on the way'' of
the line segment $[u,x]$, $x\in B(z,\mu)$.\medskip

We get the following upper estimate of the volume of the part of
$B(z,\mu)$ which should be excluded in order to eliminate all
points $x$ for which the curve obtained by concatenation of
$[u,x]$ and $[x,v]$ violates the condition $(\beta)$. The volume
does not exceed $2\cdot 7^n\cdot(3\mu)\cdot(6\beta)^{n-1}$. Again
we would like this quantity to be less than
$\frac14\,\vol(B(z,\mu))=\frac14\,\mu^n$. Recalling that $n\ge 3$
we get that the condition is satisfied for
\begin{equation}\label{E:betaEx}\beta\le\mu/546.\end{equation} This is our second requirement on
the triple $(\alpha,\beta,\gamma)$.\medskip

Now we turn to estimates the volume of the sets of those $x\in
B(z,\mu)$ for which the concatenation of $[u,x]$ and $[x,v]$ does
not satisfy condition $(\gamma)$. This happens only if either
$[u,x]$ or $[x,v]$ intersects a $\gamma$-neighborhood of an
already embedded into $X$ edge $f(t(\widetilde u\widetilde v))$
($f$ denotes the embedding). We do the estimates for the case when
$[u,x]$ intersects a $\gamma$-neighborhood of an already embedded
into $X$ edge $f(t(\widetilde u\widetilde v))$; at the end we
multiply the obtained estimate for the volume of non-suitable
points by $2$. First we estimate the number of such edges
$\widetilde u\widetilde v$. In the described situation both
$\widetilde u$ and $\widetilde v$ should be in a ball of radius
$5$ centered at $u$ (recall that $\gamma\le\frac14$). In the same
way as we estimated the number of vertices in a $3$-ball, we get
an estimate $\le 11^n$ for the number of vertices $\widetilde u$
for which $f(t(\widetilde u\widetilde v))$ can occur as an
obstacle. Hence the number of edges which could interfere with the
line segment $[u,x]$ is $\le 121^n$.
\medskip

Now we estimate from above the volume of those $x\in B(z,\mu)$
which have to be excluded because of one already embedded edge
$f(t(\widetilde u\widetilde v))$. First recall that the length of
each such edge is $\le 4$ and that we do not need to care about
its portion which is inside $B(u,\beta)$.
\medskip

For each point $y\in f(t(\widetilde u\widetilde v))$ satisfying
$||y-u||\ge\beta$ we consider the ray $\overrightarrow{uy}$. If
$y$ and $[u,x]$ violate the condition $(\gamma)$, then there is
$\widetilde x\in[u,x]$ such that $||y-\widetilde x||\le\gamma$.
Then $||\widetilde x-u||\ge\beta-\gamma$. Since $||x-u||\le 2$, we
get that the distance between $x$ and some point on the ray
$\overrightarrow{uy}$ is $\le\frac2{\beta-\gamma}\cdot\gamma$.
Therefore the set of points in $B(z,\mu)$ for which $[u,x]$ comes
``too close'' to the point $y\in f(t(\widetilde u\widetilde v))$
is covered by $\frac{2\gamma}{\beta-\gamma}$-neighborhood $T_3(y)$
of a line segment of length $\le2\mu+\frac{4\gamma}{\beta-\gamma}$
on the ray $\overrightarrow{uy}$.
\medskip

For simplicity of the remaining argument we assume that
$\gamma\le\frac1{20}\,\beta$. In such a case
$\frac{4\gamma}{\beta-\gamma}\le\mu$ and
$\frac{2\gamma}{\beta-\gamma}\le\frac{3\gamma}{\beta}$. Thus
$T_3(y)$ is covered by a $\frac{3\gamma}\beta$-neighborhood of a
line segment $\ell(y)$ of length $3\mu$. Furthermore, it is easy
to see that we may assume that the center of $\ell(y)$ is at
distance $\frac12||v-u||$ from $u$. We need to estimate from above
the volume of
\[T_3:=\bigcup_{y\in f(t(\widetilde u\widetilde v))}T_3(y).\]

We need the following observation: Let $h_1$ and $h_2$ be two
vectors in $X$ satisfying $||h_1||\ge||h_2||>0$. By the triangle
inequality we have
\begin{equation}\label{E:TriNorm}\left\|h_1-\frac{||h_1||}{||h_2||}\,
h_2\right\|\le2||h_1-h_2||.\end{equation}

Let $y_1$ and $y_2$ be two points on the curve $f(t(\widetilde
u\widetilde v))$ with $||y_1-u||\ge||y_2-u||\ge\beta$.  Using
\eqref{E:TriNorm} for $h_1=y_1-u$ and $h_2=y_2-u$ and homothety we
get that the distance between a point of $\ell(y_1)$ and the point
of $\ell(y_2)$ with the same distance to $u$ is
\begin{equation}\label{E:DistCent}
\le\frac2\beta\cdot2||y_1-y_2||=\frac4\beta\,||y_1-y_2||.\end{equation}

Since the length of $f(t(\widetilde u\widetilde v))$ is $\le 4$,
there is a $\gamma$-net of cardinality $\le\frac4{2\gamma}$ in
$f(t(\widetilde u\widetilde v))$. Also there is a
$\frac\gamma\beta$-net of cardinality
$\le\frac{3\mu\beta}{2\gamma}$ in any line segment of length
$3\mu$. Combining these nets and using inequality
\eqref{E:DistCent} we get a set $\mathcal{M}$ satisfying the
following conditions: (a)
$|\mathcal{M}|\le\frac{3\mu\beta}{\gamma^2}$; (b) For each
$q\in\bigcup_{y\in f(t(\widetilde u\widetilde v))}\ell(y)$ there
is $q_0\in\mathcal{M}$ with $||q-q_0||\le\frac{5\gamma}{\beta}$.
\medskip

Therefore balls of radii $\frac{8\gamma}{\beta}$ centered at
elements of $\mathcal{M}$ cover $T_3$. We get the estimate
\[\vol(T_3)\le\frac{3\mu\beta}{\gamma^2}\cdot\left(\frac{8\gamma}\beta\right)^n.\]

As we have already mentioned, we should consider $\le 121^n$
already embedded edges, also we need to consider the line segments
$[x,v]$ as well. Thus, the volume of all points $x$ in $B(z,\mu)$
which fail to satisfy $(\gamma)$ is estimated from above by

\[2\cdot121^n\cdot\frac{3\beta\mu}{\gamma^2}\cdot\left(\frac{8\gamma}\beta\right)^n.\]

As in the previous estimates, we would like this quantity to be
$\le\frac14\vol(B(z,\mu))$, that is, we need the inequality
\[2\cdot121^n\cdot\frac{3\beta\mu}{\gamma^2}\cdot\left(\frac{8\gamma}\beta\right)^n\le\frac14\,\mu^n\]
to hold. This inequality can be rewritten as
\[\gamma^{n-2}\le C\mu^{n-1}\beta^{n-1}\left(\frac1{8\cdot
121}\right)^{n-2},\] where $C$ is an absolute constant.\medskip

Since $n\ge 3$ we have $n-2\ge 1$ and $n-1\le2(n-2)$. Because of
this and because $\beta,\mu\in(0,1)$, we have $(\beta\mu)^{n-1}\ge
(\beta\mu)^{2(n-2)}$. Therefore it suffices to satisfy
\[\gamma^{n-2}\le C\left(\frac{(\beta\mu)^2}{8\cdot121}\right)^{n-2}.\]
Using again the inequality $n\ge 3$, we see that it suffices to
pick
\begin{equation}\label{E:gammaEx}
\gamma\le C\frac{(\beta\mu)^2}{8\cdot121}.\end{equation}

Now it is clear that we can choose $\alpha$, $\beta$, and $\gamma$
satisfying the conditions \eqref{E:alphaEx}, \eqref{E:betaEx}, and
\eqref{E:gammaEx}. We start with choosing $\beta$ satisfying
\eqref{E:betaEx}, then we choose $\alpha$ satisfying
\eqref{E:alphaEx} and $\gamma$ satisfying \eqref{E:gammaEx}. It is
clear that we may assume $\gamma\le\alpha$ and that we had right
to make the assumptions on relations between $\alpha$, $\beta$,
and $\gamma$ which we made in our proof.\medskip
\medskip

Thus, at each step it is possible to find $w\in B(z,\mu)$ such
that the curve obtained by concatenation of $[u,w]$ and $[w,v]$
satisfies the assumptions $(\alpha)$--$(\gamma)$.\medskip

To complete the proof of Lemma \ref{L:EmbedEdges} it suffices to
estimate the Lipschitz constant of the inverse map by an absolute
constant. So we need to consider two points $u,v$ in the image of
$TG$ and to estimate from above the ratio
\begin{equation}\label{E:ratio}
\frac{d_{TG}(u,v)}{||f(u)-f(v)||}.
\end{equation}

The estimate in the case when both points are vertices is given in
Lemma \ref{L:GraphInBall}. Next we consider the case when one of
the points, say $u$, is a vertex. There are two subcases: (1) The
second point is on the edge incident to $u$; (2) The second point
is on the edge which is not incident to $u$.\medskip

Subcase (1): If $||f(u)-f(v)||\le\beta$, then the corresponding
portion of the edge is a line segment. Since edges of length $1$
are represented by curves whose length is $>1$, this implies
$d_{TG}(u,v)\le||f(u)-f(v)||$. If $||f(u)-f(v)||\ge\beta$, then,
since the distance in $TG$ is $\le 1$, the ratio \eqref{E:ratio}
is $\le\frac1\beta$.
\medskip

Subcase (2): The second point $v$ is not on an edge incident with
$u$. Let $D=||f(u)-f(v)||$. Then the distance in $X$ between $u$
and one of the ends of the edge to which $v$ belongs is $\le D+2$
(recall that vertices of $G$ are identified with elements of $X$,
so $u=f(u)$). By the proof of Lemma \ref{L:GraphInBall}, there is
a path from that end to $u$ of length $\le D+2$. Hence
$d_{TG}(u,v)\le D+3$. Since by condition $(\beta)$ we have $D\ge
\beta$, the ratio \eqref{E:ratio} in this case does not exceed
\[\frac{D+3}D\le \frac{\beta+3}\beta=1+\frac3\beta.\]
\medskip

Now we consider the situation when $u$ and $v$ are on different
edges. Subcases: (1) The edges are adjacent; (2) The edges are not
adjacent.\medskip

Subcase (1): Subsubcase (a) One of the points is outside the
$\beta$-ball centered at the common end. Then, by  condition
$(\gamma)$, we have $||f(u)-f(v)||\ge\gamma$. Since in $TG$ the
distance between two points belonging to adjacent edges is $\le
2$, we get that the ratio \eqref{E:ratio} is $\le\frac2\gamma$.
\medskip

Subsubcase (b) Both points are inside the $\beta$-ball centered at
some vertex $q=f(q)$.  Let
$||f(u)-f(q)||\le||f(v)-f(q)||=\omega\beta$ for some
$\omega\in(0,1]$. Using condition $(\alpha)$ and a simple
geometric argument we get that the distance between $f(v)$ and the
ray $\overrightarrow{f(q)f(u)}$ is $\ge\frac{\alpha\omega}2$. On
the other hand,  $d_{TG}(u,v)\le 2\omega\beta$. Therefore the
ratio \eqref{E:ratio} in this case is
$\le\frac{2\omega\beta}{\frac{\alpha\omega}2}=\frac{4\beta}\alpha$.
\medskip

Subcase (2) The points $u,v$ are on non-adjacent edges. Then
$||f(u)-f(v)||\ge\gamma$. Let $D=||f(u)-f(v)||$. Then the distance
between two of the ends of the edges in $X$ is $\le D+4$. Hence in
$TG$ it is also $\le D+4$. Hence the total distance between the
points in the graph is $\le D+6$, and the ratio \eqref{E:ratio} is
\[\le\frac{D+6}D\le\frac{\gamma+6}{\gamma}\le1+\frac6\gamma.\]
Taking into account the fact that we may assume that
$\gamma\le\alpha\le\beta<1$, we get that the Lipschitz constant of
the inverse map is $\le1+\frac6\gamma$.
\end{proof}

\section{Unweighted graphs admitting isometric embeddings into
strictly convex Banach spaces}\label{S:StricConv}

\begin{observation}\label{O:GraphSC} If a finite simple connected graph $G$ endowed with its graph
distance admits an isometric embedding into a strictly convex
Banach space $X$, then $G$ is isomorphic to either a complete
graph or a path.
\end{observation}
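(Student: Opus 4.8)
The plan is to exploit the rigidity of isometric images of geodesics in a strictly convex space. Write $d$ for the graph metric of $G$ and let $f\colon G\to X$ be the given isometric embedding; note $f$ is injective. The basic tool is the elementary fact that in a strictly convex space $\|a-c\|+\|c-b\|=\|a-b\|$ forces $c$ to lie on the segment $[a,b]$. Applied to a shortest path $w_0,w_1,\dots,w_k$ in $G$ this gives $f(w_i)=f(w_0)+ie$ for the unit vector $e=\frac1k\bigl(f(w_k)-f(w_0)\bigr)$, i.e. the images of a geodesic are equally spaced on a line segment; in the case $k=2$ it specializes to the statement I will use repeatedly: whenever $a\sim c\sim b$ in $G$ with $d(a,b)=2$ one has $f(c)=\frac12\bigl(f(a)+f(b)\bigr)$.

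The first step is to treat the case in which $G$ contains a triangle and show that then $G$ is complete. I would pick a maximal clique $K$ (so $|K|\ge 3$); if $G\ne K$, connectedness gives $v\notin K$ adjacent to some vertex of $K$. For any $w\in K$ with $d(v,w)=2$, choosing a neighbour $v_1\in K$ of $v$ with $v_1\ne w$ makes $v,v_1,w$ a geodesic, so $f(v_1)=\frac12\bigl(f(v)+f(w)\bigr)$; two distinct such $w$ would then be identified by $f$, so $v$ is adjacent to all of $K$ except at most one vertex. Adjacency to all of $K$ contradicts maximality, while missing exactly one vertex $w_0\in K$ makes $v,p,w_0$ a geodesic for each of the (at least two) vertices $p\in K\setminus\{w_0\}$, forcing all such $f(p)$ to coincide. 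Either way $f$ fails to be injective, a contradiction; hence $G$ is complete.

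From now on $G$ is triangle-free. If $|V(G)|\le 2$ then $G$ is a path, so assume $|V(G)|\ge 3$; then $G$ is not complete, so $D:=\diam(G)\ge 2$. I would fix a diametral pair $u,v$, a shortest path $w_0,\dots,w_D$ between them, and normalise $f$ so that $f(w_i)=ie$ with $\|e\|=1$. The goal is to show $V(G)=\{w_0,\dots,w_D\}$; as a shortest path has no chords, this identifies $G$ with the path $P_{D+1}$. Suppose instead that some vertex $x$ is not one of the $w_i$. If $f(x)$ lay on the line $\mathbb{R}e$, then $f(x)=se$ with $|s|=d(x,w_0)\in\mathbb{Z}$, hence $s\in\mathbb{Z}$, and the diameter bounds $|s|\le D$, $|s-D|\le D$ force $s\in\{0,\dots,D\}$, whence $f(x)=f(w_s)$ and $x=w_s$; contradiction. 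So $f(x)\notin\mathbb{R}e$, and writing $2\bigl(f(x)-ie\bigr)=\bigl(f(x)-(i-1)e\bigr)+\bigl(f(x)-(i+1)e\bigr)$ and invoking strict convexity — the two summands are not positive multiples of one another, since their difference $-2e$ is a nonzero multiple of $e$ whereas $f(x)\notin\mathbb{R}e$ — gives the strict inequality $\|f(x)-ie\|<\frac12\bigl(\|f(x)-(i-1)e\|+\|f(x)-(i+1)e\|\bigr)$ for $0<i<D$. Thus $a_i:=d(x,w_i)$ is a strictly convex integer sequence whose consecutive differences lie in $\{-1,0,1\}$; being strictly increasing these differences are at most three in number, so $D\le 3$.

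The main obstacle is then the finite case analysis for $D\in\{2,3\}$, using $a_0+a_D\ge D$, $0<a_i\le D$, and triangle-freeness. For $D=2$ the difference vector is $(-1,0)$, $(0,1)$ or $(-1,1)$: the first two force $(a_i)=(2,1,1)$ or $(1,1,2)$, making $x$ and two consecutive $w_i$ a triangle; the third forces $(a_i)=(2,1,2)$, and then $f(w_1)=\frac12\bigl(f(x)+f(w_0)\bigr)=\frac12\bigl(f(x)+f(w_2)\bigr)$ gives $w_0=w_2$. For $D=3$ the difference vector must be $(-1,0,1)$, so $(a_i)=(a_0,a_0-1,a_0-1,a_0)$ with $a_0\in\{2,3\}$: if $a_0=2$ then $x,w_1,w_2$ is a triangle, while if $a_0=3$ a neighbour $a$ of $x$ lying on a shortest path joining $x$ and $w_1$ is necessarily a new vertex with $d(a,w_0)=d(a,w_2)=2$ (a smaller value produces a triangle or violates the triangle inequality $d(a,\cdot)\ge d(x,\cdot)-1$), so $f(w_1)=\frac12\bigl(f(a)+f(w_0)\bigr)=\frac12\bigl(f(a)+f(w_2)\bigr)$ again gives $w_0=w_2$. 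Every case contradicts injectivity of $f$ or triangle-freeness, so no such $x$ exists and $G=P_{D+1}$, completing the proof.
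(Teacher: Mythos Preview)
Your proof is correct, but it takes a genuinely different route from the paper's. The paper argues locally: if $G$ is not a path then it is a cycle or has a vertex of degree at least $3$; long cycles are ruled out because strict convexity forces the images of consecutive triples onto a common line; and at a vertex $v$ of degree $\ge 3$ the midpoint rule forces $N(v)\cup\{v\}$ to be a clique, which then propagates to make $G$ complete. The whole argument fits in a few lines.

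Your approach is more global. You split into the triangle/triangle-free cases, settle the first by a maximal-clique argument, and in the second normalise a diametral geodesic to $0,e,\dots,De$ and observe that for any off-path vertex $x$ the integer sequence $a_i=d(x,w_i)$ is strictly convex with steps in $\{-1,0,1\}$, forcing $D\le 3$, after which a short case analysis finishes. This is longer and requires more bookkeeping (the $D=3$, $a_0=3$ subcase needs an auxiliary vertex), but the strict-convexity-of-distances observation is pleasant and would generalise more readily to related questions. Two minor cosmetic points: the difference of your two summands is $2e$ rather than $-2e$ (immaterial), and in the first step the phrase ``two distinct such $w$ would then be identified'' is justified once one notes that a \emph{single} fixed neighbour $v_1\in K$ of $v$ works for every such $w$; this is implicit but worth a word.
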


\begin{proof} Assume that $G$
is a finite simple connected graph, which is not a path, but is
such that $(V(G),d_G)$ is isometric to a subset of a strictly
convex space $X$ (we use the standard definition of strict
convexity, see \cite[p.~409]{BL00}). Denote the isometric
embedding by $f$. Our goal is to show that these conditions imply
that $G$ is a complete graph.
\medskip

The fact that $G$ is not a path immediately implies that $G$ is
either a cycle or has a vertex of degree $3$. In the case when $G$
is a cycle we observe that the cycle $C_3$ is simultaneously a
complete graph $K_3$, and we are done in this case. As for longer
cycles we prove that they do not admit isometric embeddings into
$X$ in the following way. Since vertices $v_{k-1}, v_{k}, v_{k+1}$
in a cycle satisfy
$d_G(v_{k-1},v_{k+1})=d_G(v_{k-1},v_{k})+d_G(v_{k},v_{k+1})$, by
strict convexity we get that $f(v_{k-1}), f(v_{k})$, and
$f(v_{k+1})$ should be on the same line, with $f(v_k)$ being a
midpoint of the line segment $[f(v_{k-1}),f(v_{k+1})]$. Since this
observation is applicable also to $v_{n-1},v_n,v_1$ and
$v_n,v_1,v_2$, we get a contradiction.
\medskip

Now let $v\in V(G)$ be a vertex of degree $\ge 3$, and let
$u_1,u_2,u_3$ be its neighbors. We show that $u_i$ are pairwise
adjacent. If two pairs of them (say $u_1,u_2$ and $u_2,u_3$) are
not adjacent, we get a contradiction because $f(v)$ should be
simultaneously a midpoint of the line segment joining $f(u_1)$ and
$f(u_2)$ and a midpoint of the line segment joining $f(u_2)$ and
$f(u_3)$.
\medskip

If only one edge, say $u_1u_3$, is missing then both $f(u_2)$ and
$f(v)$ should be midpoints of the line segment joining  $f(u_1)$
and $f(u_3)$.
\medskip

Therefore $v$ and all of its neighbors should form a complete
subgraph in $G$. Since the same should hold for each of the
neighbors of $v$, we get that $G$ should be a complete graph.
\end{proof}

The author wishes to thank the referee for the helpful and
constructive criticism of the first version of the paper.

\end{large}

\begin{small}

\renewcommand{\refname}{\section{References}}

\end{small}
\medskip

\noindent{\sc Department of Mathematics and Computer Science\\
St. John's University\\ 8000 Utopia Parkway, Queens, NY 11439,
USA}\\
e-mail: {\tt ostrovsm@stjohns.edu}


\begin{thebibliography}{GNRS04}

\bibitem[Bau07]{Bau07} F.~Baudier, Metrical characterization of
super-reflexivity and linear type of Banach spaces, {\it Archiv
Math.},  {\bf  89}  (2007),  no. 5, 419--429; {\tt
arXiv:0704.1955}.

\bibitem[Bau09+]{Bau09+} F.~Baudier, Embeddings of proper metric spaces into Banach
spaces, {\it Houston J. Math.}, to appear; {\tt arXiv:0906.3696}.

\bibitem[BL00]{BL00} Y. Benyamini, J. Lindenstrauss, {\it Geometric Nonlinear Functional Analysis}, volume 1,
American Mathematical Society, Providence, R.I., 2000.

\bibitem[Bou86]{Bou86} J.~Bourgain, The metrical interpretation
of superreflexivity in Banach spaces, {\it Israel J. Math.}, {\bf
56} (1986), no. 2, 222--230.

\bibitem[BMW86]{BMW86} J.~Bourgain, V.~Milman, H.~Wolfson, On type of
metric spaces, {\it Trans. Amer. Math. Soc.}, {\bf 294} (1986),
no. 1, 295--317.

\bibitem[BDK66]{BDK66} J.~Bretagnolle, D.~Dacunha-Castelle, J.-L.~Krivine,
Lois stables et espaces $L^{p}$, {\it Ann. Inst. H. Poincar\'e},
Sect. B (N.S.), {\bf 2} (1965/1966) 231--259.

\bibitem[CK09]{CK09} J.~Cheeger, B.~Kleiner, Differentiability of Lipschitz
maps from metric measure spaces to Banach spaces with the
Radon-Nikod\'ym property, {\it  Geom. Funct. Anal.}, {\bf  19}
(2009), no. 4, 1017--1028; {\tt arXiv:0808.3249}.

\bibitem[DK72]{DK72} D.~Dacuhna-Castelle, J.-L.~Krivine, Applications des ultraproduits \`a l'\'etude des espaces et des alg{\`e}bres de Banach,
{\it Studia Math.}, {\bf 41} (1972), 315--334.

\bibitem[DJT95]{DJT95} J.~Diestel, H.~Jarchow, A.~Tonge, {\it Absolutely summing operators},
Cambridge Studies in Advanced Mathematics, {\bf 43}, Cambridge
University Press, Cambridge, 1995.

\bibitem[Gro93]{Gro93} M.~Gromov, Asymptotic invariants of infinite groups,
in: A.~Niblo, M.~Roller (Eds.) {\it Geometric group theory},
London Math. Soc. Lecture Notes, {\bf 182}, 1--295, Cambridge
University Press, 1993.

\bibitem[HM82]{HM82} S.~Heinrich, P.~Mankiewicz, Applications of ultrapowers to the
uniform and Lipschitz classification of Banach spaces, {\it Studia
Math.}, {\bf 73} (1982), no. 3, 225--251.

\bibitem[JRZ71]{JRZ71} W.\,B.~Johnson, H.\,P.~Rosenthal, M.~Zippin, On bases,
finite dimensional decompositions and weaker structures in Banach
spaces, {\it  Israel J. Math.}, {\bf  9} (1971), 488--506.

\bibitem[JS09]{JS09} W.\,B.~Johnson, G. Schechtman, Diamond graphs and
super-reflexivity, {\it Journal of Topology and Analysis}, {\bf 1}
(2009), 177--189.

\bibitem[Kri76]{Kri76} J.-L.~Krivine, Sous-espaces de
dimension finie des espaces de Banach r\'eticul\'es, {\it Ann. of
Math.}, {\bf 104} (1976), 1--29.

\bibitem[LR69]{LR69} J.~Lindenstrauss, H.\,P.~Rosenthal, The $\mathcal{L}_{p}$ spaces, {\it Israel J. Math.},
{\bf  7} (1969), 325--349.

\bibitem[LT79]{LT79} J.~Lindenstrauss, L.~Tzafriri, {\it Classical
Banach spaces}, Volume {\bf II}, Function spaces, Springer-Verlag,
Berlin, 1979.

\bibitem[Mat99]{Mat99} J.~Matou\v sek, On embedding trees into
uniformly convex Banach spaces, {\it Israel J. Math.}, {\bf 114}
(1999), 221--237.

\bibitem[MP76]{MP76} B.~Maurey, G.~Pisier, S\'eries de variables al\'eatoires
vectorielles ind\'ependantes
 et propri\'et\'es g\'eom\'etriques des espaces de Banach, {\it Studia Math.}, {\bf 58} (1976), no. 1, 45--90.

\bibitem[MN08]{MN08} M.~Mendel, A.~Naor, Metric cotype, {\it Ann.
Math.}, {\bf 168} (2008), 247--298; {\tt arXiv:math/0506201}.

\bibitem[MS86]{MS86} V.\,D.~Milman, G.~Schechtman,
{\it Asymptotic theory of finite dimensional normed spaces}, Berlin, Springer-Verlag, 1986.

\bibitem[Ost11]{Ost10+} M.\,I.~Ostrovskii, On metric characterizations of some classes of Banach
spaces, {\it C. R. Acad. Bulgare Sci.}, {\bf 64} (2011), no. 6,
775--784; {\tt arXiv:1102.5082}.

\bibitem[Pis86]{Pis86} G.~Pisier,
Probabilistic methods in the geometry of Banach spaces, in: {\it
Probability and analysis (Varenna, 1985)}, 167--241, {\it Lecture
Notes in Math.}, {\bf 1206}, Springer, Berlin, 1986.

\end{thebibliography}
\end{document}